\numberwithin{equation}{section}
\theoremstyle{plain}
\newtheorem{thm}{Theorem}
\newtheorem{prop}{Proposition}[section]
\newtheorem{lem}[prop]{Lemma}
\newtheorem{conj}{Conjecture}
\theoremstyle{definition}
\theoremstyle{remark}
\newtheorem*{rem*}{Remark}
\newtheorem*{rems*}{Remarks}
\newcommand{\Z}{\mathbb{Z}}
\newcommand{\CM}{\mathcal{M}}
\let\lcm\relax
\DeclareMathOperator{\lcm}{lcm}
\newcommand{\bv}\boldsymbol{}
\newcommand{\eps}\varepsilon
\renewcommand{\leq}{\leqslant}
\renewcommand{\geq}{\geqslant}
\renewcommand{\mod}[1]{\,(\mathrm{mod}\,#1)}
\title{On a conjecture of Krukenberg and a problem of Dalton and Trifonov}
\author{Jonah Klein}
\begin{document}
\maketitle
\begin{abstract}
We prove that if the smallest modulus of a covering system with distinct moduli is $5$, then the largest modulus is at least 108. We also prove that if the smallest modulus of a covering system with distinct moduli is $5$, then the least common multiple of the moduli is at least 1440. Finally, we prove that if the smallest modulus of a covering system with distinct moduli is 6, then the least common multiple of the moduli is at least $5040$. The constants $108$, $1440$ and $5040$ are best possible. This resolves a conjecture of Krukenberg, a problem of Dalton and Trifonov, and a generalization thereof. 
\end{abstract}
\section{Introduction}

The \textit{arithmetic progression} $a \mod m$ is the set $\{a+km: k\in \Z\}$. A \textit{covering system} is a finite set of arithmetic progressions, with the property that every integer belongs to at least one of them. Covering systems were introduced by Erd\H{o}s in 1950 \cite{ErdosIntro}. In the years since, they have received a considerable amount of attention, see for example a survey of Porubsk\'y and Sch\"onheim \cite{Porubsky}, or again a more recent survey by Balister \cite{BalisterSurvey}. 

We say that a covering system $\{a_1 \mod{m_1},\ldots,a_n \mod {m_n}\}$ is \textit{distinct} if $m_i \neq m_j$ whenever $i \neq j$. For example, it is not too hard to verify that
\[\{0 \mod{2}, 0 \mod{3}, 1 \mod4, 1 \mod 6, 11\mod{12}\}\]
is a distinct covering system. 

In 1971, Krukenberg \cite{Krukenberg} proved that this covering system is the simplest distinct covering system with minimum modulus 2, in the sense that if the smallest modulus of a distinct covering system is 2, then the largest modulus is at least 12. Krukenberg also showed that if the smallest modulus of a distinct covering system is 3, then the largest modulus is at least 36, and gave an example of a distinct covering system with minimum modulus 3 and largest modulus 36. He also stated without proof that if the smallest modulus of a distinct covering system is 4, then the largest modulus is at least 60, and conjectured that if the smallest modulus is 5, then the largest modulus is 108. He gave examples of covering systems that demonstrate that if these bounds are correct, then they are best possible. 

In 2022, Dalton and Trifonov \cite{Dalton} supplied a proof of Krukenberg's claim about the minimum modulus 4, and reaffirmed Krukenberg's conjecture for the minimum modulus 5. We resolve this conjecture. 

\begin{thm}\label{5108}
If the smallest modulus of a covering system with distinct moduli is 5, then the largest modulus is at least 108. 
\end{thm}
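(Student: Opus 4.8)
The plan is a proof by contradiction. Suppose $\CC=\{a_1\bmod m_1,\dots,a_n\bmod m_n\}$ is a covering system with distinct moduli, $\min_i m_i=5$ and $\max_i m_i\le 107$; the goal is to derive a contradiction. (The complementary fact that $108$ actually occurs is witnessed by Krukenberg's explicit system, which one checks by hand, so all the work is in the lower bound.) Write $L=\lcm(m_1,\dots,m_n)$, so that $\CC$ is the same thing as a cover of $\Z/L\Z$, and recall the density inequality $\sum_i 1/m_i\ge 1$. The first task is to cut down the possible shape of $\CC$. Since every $m_i\le 107$, a prime $p>53$ can divide at most one modulus, and then only as $m_i=p$; moreover the union $U$ of the progressions with modulus prime to $p$ covers the \emph{same} proportion of each residue class modulo $p$, so if $U$ covers all of one class mod $p$ it covers $\Z$. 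Hence when $p\mid L$ with $p>53$ the progression $a_i\bmod p$ is redundant and may be deleted, and WLOG $L$ has no prime factor exceeding $53$. Pushing the same redundancy and density bookkeeping a little further also bounds $v_p(L)$ for each remaining $p$ (for instance $v_2(L)\le 6$, $v_3(L)\le 4$, $v_5(L)\le 2$, $v_7(L)\le 2$, since $2^7,3^5,5^3,7^3>107$), so only finitely many profiles for $L$ survive.

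The engine of the argument is a recursive reduction on the primes dividing $L$, taken in increasing order. Fix a prime $p$ with $p^{\alpha}\,\|\,L$. For each residue $r\bmod p^{\alpha}$, covering the class $\{x\equiv r\pmod{p^{\alpha}}\}$ with $\CC$ is equivalent, after identifying that class with $\Z$ via $x=r+p^{\alpha}y$, to covering $\Z$ with the sub-multiset of progressions
\[
\CC_r \;=\; \bigl\{\, a_i'\bmod (m_i/p^{v_p(m_i)}) \;:\; a_i\equiv r\!\!\pmod{p^{v_p(m_i)}} \,\bigr\}.
\]
In particular, if $m_i$ is a pure prime power $p^{j}$ and $a_i\equiv r\pmod{p^{j}}$, then the class $r$ is covered outright; the number of residues $r\bmod p^{\alpha}$ that can be dispatched in this way is at most $\sum_{j}p^{\alpha-j}$, the sum over the (few) exponents $j$ with $p^{j}$ a modulus of $\CC$ — for $p=2$ only $j\in\{3,4,5,6\}$ are available in $[5,107]$, for $p=3$ only $j\in\{2,3,4\}$, for $p=5$ only $j=2$. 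Every remaining residue class mod $p^{\alpha}$ inherits a sub-system $\CC_r$ whose moduli are again bounded and whose reciprocals must still sum to at least $1$; one iterates over $p=2$, then $p=3$, then $p=5$ (and $p=7,11,\dots$ as needed), building a tree in which each node carries the multiset of surviving moduli together with the density deficit, and pruning any branch whose surviving density has already dropped below $1$ or which would force a surviving modulus past its budget. The place where distinctness earns its keep is exactly that only a bounded fraction of residue classes at each prime can be handled "for free'' by a pure prime power, so the rest must spend a strictly limited stock of medium-sized moduli, and the tree bottoms out.

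The hard part is controlling the size of this case tree: even after the structural reductions there remain several candidate values of $L$ and, for each, many ways of distributing the chosen residues among the prime-power classes, so the enumeration must be organized to exploit the symmetry of the residue labels, the monotonicity of the density deficit down each branch, and the observation that once enough budget has been committed the remaining moduli are essentially forced; realistically a portion of this bookkeeping will be machine-verified. A secondary difficulty is making the "delete an isolated prime'' and "a pure power covers its class'' reductions fully rigorous in the presence of many interacting progressions — that is, certifying that passing to a quotient never loses a genuinely better covering. Once every leaf of the tree is seen to be contradictory, no such $\CC$ exists, which is exactly Theorem~\ref{5108}.
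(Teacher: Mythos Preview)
Your sketch outlines a plausible computer-assisted strategy, but it has two substantive gaps. First, your structural reduction is far weaker than what is actually needed. The redundancy argument you give for primes $p>53$ is precisely the $a=1$ special case of Krukenberg's lemma (Lemma~\ref{Krukenberglemma1}): whenever $p^{a}(p+1)>107$, every progression whose modulus is divisible by $p^{a}$ may be dropped. Applied in full strength this already eliminates every prime $p\ge 11$ (since $11\cdot 12>107$) and forces $v_2(L)\le 5$, $v_3(L)\le 2$, $v_5(L)\le 1$, $v_7(L)\le 1$, so that one may take $L\mid 2^{5}\cdot 3^{2}\cdot 5\cdot 7=10080$. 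The bounds you actually state ($v_2\le 6$, $v_3\le 4$, $v_5\le 2$, $v_7\le 2$, with all primes $11\le p\le 53$ still in play) are only the trivial ones coming from $p^{a}\le 107$, and they leave the search space astronomically larger than it needs to be. (Minor slip: for $p=5$ the pure powers in $[5,107]$ are $5$ and $25$, so $j\in\{1,2\}$, not just $j=2$.) Second, and more fundamentally, you never actually execute the tree search: a contradiction is asserted at the leaves but not exhibited, and density pruning by $\sum 1/m_i\ge 1$ is only a very loose necessary condition that will not close nearly enough branches on its own.

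The paper's proof proceeds differently after the Krukenberg reduction to $L\mid 10080$. It merges $\{32,96\}$ into a duplicated modulus $48$ via Lemma~\ref{Krukenberglemma2}, uses translation (Lemma~\ref{translation}) and the $p$-automorphism lemma (Lemma~\ref{pauto}) to pin down the specific progressions $4\bmod 5$, $6\bmod 7$, $7\bmod 8$, $8\bmod 9$, and $33\bmod 35$, and then encodes the residual existence question as a single binary integer program which Gurobi certifies infeasible after roughly ten hours. The fact that even this sharply reduced instance takes that long strongly suggests that your recursive residue-class tree, run with the weaker reductions and looser pruning you describe, would not terminate in any practical sense.
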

To prove Theorem~\ref{5108}, we use some ideas of Krukenberg \cite{Krukenberg}, some ideas of Dalton and Trifonov \cite{Dalton}, some ideas of the author \cite{Memoire}, and a recent idea of McNew and Setty \cite{McNew}.

 Following Theorem~\ref{5108}, it is natural to ask a similar question where the smallest modulus is 6. Towards this goal, we prove the following theorem. 

\begin{thm}\label{6192}
There is a distinct covering system with minimum modulus 6 and maximum modulus 168. 
\end{thm}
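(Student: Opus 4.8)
The goal is to exhibit an explicit distinct covering system whose moduli all lie in $[6,168]$. This is a finite, constructive claim, so the "proof" will ultimately be an explicit list of congruences $a_i \mod{m_i}$ together with a verification that every residue class modulo $L := \lcm(m_i)$ is covered. The plan is to build the system hierarchically, choosing the moduli to be divisors of a highly composite number that divides $5040 = \lcm(1,\ldots,7,\,8,9)$-type value but stays below $168$; a natural choice is to work modulo $L = 2^4\cdot 3^2\cdot 5\cdot 7 = 5040$, which is consistent with the lower bound $5040$ proved elsewhere in the paper for minimum modulus $6$, and to use only moduli dividing $5040$ that are at least $6$ and at most $168$.

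First I would fix the "skeleton" of small moduli that does the bulk of the covering: start with residues modulo $6,7,8,9,10,12,14,15,18,\ldots$, i.e. the divisors of $5040$ in the allowed range, and greedily assign residues so as to cover as large a proportion of $\Z/5040\Z$ as possible while keeping the remaining uncovered set structured (a union of arithmetic progressions to a small modulus). The classical strategy, going back to Krukenberg, is to cover all but a controlled "leftover" using moduli built from the primes $2$ and $3$, then mop up the residues that are $\equiv 0 \bmod{2^a3^b}$ using moduli divisible by $5$ or $7$; because $5\cdot 7 = 35$ and $2^2\cdot5\cdot7 = 140 \le 168$ and $2^3\cdot 3\cdot 7 = 168\le 168$, there is just enough room to close the system. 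Concretely, I would (i) use moduli $\{6,8,9,12,16,18,24,36,48,72,144\}$ — all $2^a3^b$-type divisors of $144$ — to reduce the uncovered set to a union of classes modulo $\lcm$ of that block; (ii) use moduli divisible by $5$ (such as $10,15,20,30,40,45,60,90,120$, all $\le 168$) to further reduce; (iii) use moduli divisible by $7$ (such as $7,14,21,28,42,56,63,84,126,168$) to finish. Each stage is a bookkeeping exercise: track the uncovered set as an explicit finite union of residue classes and check it shrinks to $\emptyset$.

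The main obstacle is not conceptual but combinatorial optimization: one must choose the residues $a_i$ at each stage compatibly, since a greedy choice early on can leave an uncovered set that cannot be finished with the remaining allowed moduli. In practice this is resolved by following (and lightly modifying) Krukenberg's published minimum-modulus-$5$, maximum-modulus-$108$ construction and "lifting" it — the minimum-$6$ case has more small moduli available ($7,8,9,\ldots$) but a larger target $L$, and the known constructions for minimum modulus $5$ already encode the right residue choices modulo $2,3$; one adjoins the class $0\bmod 6$ (or rearranges so that $6$ plays the role $5$ did) and re-balances the $5$- and $7$-divisible moduli. I expect the bound $168$ to be forced precisely by needing a modulus of the form $2^3\cdot3\cdot7$ or $2^2\cdot3\cdot7\cdot\,(\text{no room for }5)$ to cover the last residue classes that are divisible by a large power of $2$ times $3$; verifying that $168$ suffices — and implicitly that nothing larger is needed — is the crux, and is checked by exhibiting the explicit system and confirming coverage modulo $5040$.

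Finally, I would present the system as a table (modulus, residue) grouped by the prime content of the modulus, and include a short verification argument: partition $\Z/5040\Z$ according to the $2$-adic and $3$-adic valuations, and for each of the finitely many cases point to the congruence in the list that covers it. No individual step requires more than elementary modular arithmetic; the content is entirely in the choice of residues, which is why the statement is phrased as an existence result with an explicit witness rather than proved by a general principle.
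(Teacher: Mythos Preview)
The theorem is a bare existence claim, so a proof is nothing more than an explicit list of congruences together with a coverage check. Your write-up describes a search strategy but never produces the witness, so as it stands it is not a proof.

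More importantly, your plan commits to a specific restriction that the paper does \emph{not} make and that may well be infeasible. You propose to use only moduli dividing $5040=2^4\cdot3^2\cdot5\cdot7$ and lying in $[6,168]$. The paper's explicit system (found by Trifonov) instead uses moduli such as $11,22,25,27,32,33,44,50,54,55,66,75,77,88,96,100,108,110,132,135,150,154$, none of which divides $5040$; the least common multiple of its moduli is $2^5\cdot3^3\cdot5^2\cdot7\cdot11=1\,663\,200$. In other words, the paper achieves maximum modulus $168$ precisely by bringing in the extra prime $11$ and the higher prime powers $2^5,3^3,5^2$. Whether one can get by with divisors of $5040$ alone in $[6,168]$ is not settled by your sketch: the reciprocal sum of those $36$ moduli is about $1.51>1$, so the Mirsky--Newman obstruction does not rule it out, but that condition is only necessary, and the fact that the known construction needs an additional prime is evidence against feasibility. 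If you wish to salvage your approach you would have to actually solve that integer covering problem (e.g.\ with an ILP solver, as in Section~\ref{integerprogramming}); absent that, the proof is simply to exhibit and verify a concrete system, which is exactly what the paper does.
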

We further conjecture that this is in fact the best we can do. 

\begin{conj}
If the smallest modulus of a covering system with distinct moduli is 6, then the largest modulus is at least 168.
\end{conj}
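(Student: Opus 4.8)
Since this assertion is stated as a conjecture, what follows is the line of attack I would pursue; it closely parallels the proof of Theorem~\ref{5108}.

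Suppose, for contradiction, that $\CC=\{a_1\mod{m_1},\dots,a_n\mod{m_n}\}$ is a distinct covering system with $\min_i m_i=6$ and $\max_i m_i\le 167$, and put $L=\lcm(m_1,\dots,m_n)$. The first step is to restrict the primes dividing $L$. A standard argument (as in \cite{Krukenberg}) shows that if $p$ is the largest prime dividing $L$, then at least $p$ of the moduli are distinct multiples of $p$, and since each is at most $167$ this forces $\lfloor 167/p\rfloor\ge p$, i.e.\ $p\le 11$; a finer accounting of how multiples of several primes share the covering load---using also the density identity $\sum_i 1/m_i\ge 1$---should rule out $11$, leaving only $2,3,5,7$, so that $L\mid 2^7 3^4 5^3 7^2$ (the exponents being forced by $3^5,5^4,7^3>167\ge m_i$). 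Combined with the companion bound $L\ge 5040$ proved in this paper, $L$ is then one of a bounded list of explicit $7$-smooth integers, and every modulus lies among the $7$-smooth integers in $[6,167]$.

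For each admissible $L$ one runs the structural case analysis of Krukenberg and of \cite{Dalton,Memoire} inside $\Z/L\Z$. Process the primes $2,3,5,7$ in turn; at each stage keep a partial modulus $L'$, the multiset of assigned moduli dividing $L'$, and the set $U\subseteq\Z/L'\Z$ of residues not yet covered, and branch on the ways the next batch of moduli (those whose least common multiple with $L'$ introduces the next prime power) can shrink $U$. Distinctness enters at every node, capping how many progressions with a prescribed modulus remain available, which is what makes each branching finite. The recent idea of McNew and Setty~\cite{McNew}---a sharpened estimate, at each node, for the total ``covering potential'' of the moduli not yet used measured against the density of $U$---together with the running inequality $\sum 1/m_i\ge 1$ should prune all but a manageable number of branches.

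One then checks that every surviving branch fails to be a covering system, in each case exhibiting an uncovered residue class or violating the density budget. Together with Theorem~\ref{6192}, which supplies a distinct covering system with minimum modulus $6$ and maximum modulus $168$, this would show $168$ is best possible and establish the conjecture. The principal obstacle I anticipate is the size of this case analysis: for minimum modulus $5$ the ceiling $108$ keeps the admissible moduli and prime-power exponents small, so the search tree is short, but for minimum modulus $6$ the ceiling is $167$ and there are four admissible primes, so the tree is far larger and a computer-verified certificate will almost certainly be required, with the strength of the McNew--Setty pruning being the crux. A secondary difficulty is the prime-elimination step: excluding $11$ cleanly---rather than brute-forcing the $11$-cases through the main search---needs the constants in the density accounting to cooperate with the precise ceiling $167$, which is delicate.
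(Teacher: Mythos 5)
There is a fundamental gap: this statement is left \emph{open} in the paper --- it is stated as a conjecture, and the only result proved about it is Theorem~\ref{6192}, which supplies the construction showing $168$ would be best possible. Your submission is likewise not a proof but a programme, and you say so yourself; the decisive step (the exhaustive case analysis or integer-programming infeasibility certificate over all admissible moduli in $[6,167]$) is exactly the part that is not carried out, and it is precisely the part the paper's authors evidently could not complete either. Saying that McNew--Setty-style pruning ``should'' make the search manageable is a hope, not an argument, so as it stands nothing is established beyond what the paper already claims.

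Two concrete points in your outline would also need repair before the programme could even be launched. First, your exponent bounds are derived from the condition $p^a>167$, but the correct reduction (Lemma~\ref{Krukenberglemma1}) discards moduli divisible by $p^a$ as soon as $p^a(p+1)>167$; this gives $L\mid 2^5\cdot 3^3\cdot 5^2\cdot 7\cdot 11$, not $2^7\cdot 3^4\cdot 5^3\cdot 7^2$, and in particular $7^2$ is already excluded. Second, and more seriously, your plan banks on eliminating the prime $11$, but this cannot succeed: $11\cdot 12=132\leq 167$, so Lemma~\ref{Krukenberglemma1} does not touch it, your crude count $\lfloor 167/11\rfloor=15\geq 11$ does not exclude it, and the extremal covering system of Theorem~\ref{6192} actually uses the moduli $11,22,33,44,55,66,77,88,110,132,154$. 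Any correct attack must therefore carry $11$ (and its multiples) through the entire search, which substantially enlarges the computation rather than trimming it; the density identity $\sum 1/m_i>1$ (Lemma~\ref{Mirsky}) alone will not save you here.
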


Krukenberg also constructed distinct covering systems with least modulus $m$, while trying to keep the least common multiple of the moduli $L$ as small as possible. When $m=3$, he achieved $L=120$. When $m=4$, he achieved $L=360$. When $m=5$, he achieved $L=1440$. When $m=6$, he achieved $L=5040$. When $m=7$, he achieved $L=15120$. Dalton and Trifonov proved that these are best possible for $m=3,4$. They also asked whether this is the case for $m=5$. Using similar ideas as for the proof of Theorem~\ref{5108}, we prove that this is the case for $m=5$ and $m=6$ as well. 

\begin{thm}\label{51440}
If the smallest modulus of a covering system with distinct moduli is 5, then the least common multiple of the moduli is at least $1440$. 
\end{thm}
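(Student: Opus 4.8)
The plan is to argue by contradiction. Suppose $\{a_1\bmod m_1,\dots,a_n\bmod m_n\}$ is a distinct covering system with $\min_i m_i=5$, and set $L=\lcm(m_1,\dots,m_n)$; assume $L\le 1439$. Since $5$ occurs among the moduli, $5\mid L$, so $L$ ranges over an explicit finite set of multiples of $5$ below $1440$. The first reduction is a density argument. Every covering system satisfies $\sum_i 1/m_i\ge 1$, with equality only for an exact cover; but a classical theorem of Mirsky and Newman (also obtained independently by Davenport and by Rado) says that in an exact cover by two or more arithmetic progressions the largest modulus is repeated, which is impossible with distinct moduli, so in fact $\sum_i 1/m_i>1$. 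As the $m_i$ are distinct divisors of $L$ that are $\ge 5$, this forces
\[
\sum_{\substack{d\mid L\\ d\ge 5}}\frac1d \;=\; \frac{\sigma(L)}{L}-\sum_{\substack{d\mid L\\ d\le 4}}\frac1d \;>\;1 .
\]
A direct check of this inequality eliminates all but a short explicit list $\CL$ of admissible $L$: heuristically $\CL$ is contained in the set of $3$-smooth multiples of $5$ below $1440$ together with a few values enriched by $5^2$, $7$ or $11$ — concretely something like $240,360,420,480,540,600,720,840,900,960,1080,1200,1260,1320$ (and possibly the borderline $1050$); I would pin down the exact list by computing $\sigma(L)/L$ for each multiple of $5$ below $1440$.

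It remains to rule out, for each $L\in\CL$, a distinct covering system with minimum modulus $5$ all of whose moduli divide $L$. Here I would invoke the combinatorial machinery behind Theorem~\ref{5108}, which I may assume, but now under the hypothesis ``all moduli divide $L$'' in place of ``all moduli are below $108$''. The governing idea is to work modulo $L$ and track which residues remain uncovered after the cheap progressions are accounted for. For a prime power $p^{e}\,\|\,L$, the progressions whose modulus is not divisible by $p^{e}$ leave uncovered a set which is periodic with a period dividing $L$ but of $p$-adic valuation $<e$, and hence splits evenly among the $p$ residue classes refining each class mod $p^{e-1}$; only the progressions with $p^{e}\mid m_i$ can cover this last $p$-adic layer, and distinctness caps how many of them have modulus dividing $L$ (there are at most $\tau(L/p^{e})$ such, fewer after removing those $<5$). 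Running this simultaneously for the primes $2,3,5$ (and $7$ or $11$ when they divide $L$), and feeding in the sharpened density inequality together with the McNew--Setty-type counting input, pins down for each $L\in\CL$ which moduli may occur and forces a contradiction. The cases with $7\mid L$, $11\mid L$, or $25\mid L$ (namely $420,840,1260$; $1320$; $600,900,1200$, and $1050$) should be comparatively easy, since those $L$ have few small divisors and the $p$-type counting bites immediately.

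The main obstacle is the genuinely rich cases — above all $L=720=2^{4}\cdot3^{2}\cdot5$, and also $L=1080$, $960$ and $360$ — where the divisors $\ge 5$ of $L$ have reciprocal sum comfortably above $1$, so no soft argument applies and one must show by hand, or by the structural lemmas imported from the proof of Theorem~\ref{5108}, that these divisors cannot be assembled into a cover. I expect each such case to branch according to which of the awkward moduli ($9,16,27,32,25,\dots$) are present, paralleling the branching in Krukenberg's and in Dalton and Trifonov's treatments of the minimum-modulus $3$ and $4$ analogues, with $L=720$ likely splitting into the most sub-cases. Once every $L\in\CL$ is eliminated we have the desired contradiction, so $L\ge 1440$; Krukenberg's construction with $L=1440$ shows that this is sharp.
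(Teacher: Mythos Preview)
Your overall skeleton matches the paper's: argue by contradiction, observe $5\mid L$, apply Mirsky--Newman to cut down to a finite list of candidate $L$'s, then eliminate each. But there are two substantive divergences, one an omission and one a misconception.

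First, the reductions. The paper's candidate list from the density inequality is
\[
\{240,360,420,480,540,600,630,720,840,900,960,990,1050,1080,1200,1260,1320\},
\]
so your tentative list is close but misses $630$ and $990$ (and $1050$ is genuinely on it, not borderline). More importantly, the paper prunes this in two ways you do not mention: (i) if $L_1\mid L_2$ and both are candidates, it suffices to rule out $L_2$, since a cover using divisors of $L_1$ is a fortiori a cover using divisors of $L_2$; this discards everything below $720$; and (ii) Lemma~\ref{smallerp} (a Simpson--Zeilberger-type prime-replacement lemma) discards $990$ and $1320$ by trading the prime $11$ for $7$. Only the eight values $\{720,840,900,960,1050,1080,1200,1260\}$ remain.

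Second, and this is the real gap: the ``combinatorial machinery behind Theorem~\ref{5108}'' is not what you imagine. In this paper, both Theorem~\ref{5108} and the present theorem are proved by encoding the residual integer covering problem as an integer program and certifying infeasibility with a MIP solver (Gurobi), after using Lemmas~\ref{translation} and~\ref{pauto} to pin down a few progressions and shrink the model. There is no $p$-adic layer-counting or Krukenberg/Dalton--Trifonov-style branching argument here. Your sketch may be viable for the thin $L$'s, but for the dense ones --- above all $L=720$, which you yourself flag as the hardest --- you offer only an expectation that the cases will close, not an argument; the paper sidesteps this entirely by handing each of the eight cases to the solver, which dispatches them in seconds.
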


\begin{thm}\label{65040}
If the smallest modulus of a covering system with distinct moduli is 6, then the least common multiple of the moduli is at least $5040$. 
\end{thm}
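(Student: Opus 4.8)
The plan is to show that if a distinct covering system has least modulus $6$, then $\lcm$ of the moduli must be divisible by a sufficiently rich set of primes and prime powers to force $L \geq 5040 = 2^4 \cdot 3^2 \cdot 5 \cdot 7$. The main structural tool is a \emph{density/measure argument}: to each modulus $m_i$ we associate the density $1/m_i$, and since every integer is covered, $\sum_i 1/m_i \geq 1$. Because the moduli are distinct and all at least $6$, the available denominators are constrained, and one analyzes which primes must appear in $L$. The key observation is that the set of all distinct integers $\geq 6$ whose only prime factors lie in a restricted set $S$ has bounded density sum; if $S$ is too small (e.g., $S \subseteq \{2,3,5\}$, or $L$ has too few factors of $2$ or $3$), then $\sum_{m_i \mid L, m_i \geq 6} 1/m_i < 1$, contradicting the covering property. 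So first I would enumerate, for each candidate $L < 5040$, the multiset of divisors $d$ of $L$ with $d \geq 6$, compute $\sum_{d \mid L, d \geq 6} 1/d$, and rule out all $L$ for which this sum is $< 1$; this already eliminates the vast majority of cases.

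For the surviving values of $L < 5040$ — those for which the naive density sum does exceed $1$ — a purely quantitative argument is insufficient, and one must use the finer combinatorial structure of covering systems. Here I would adapt the techniques referenced for Theorem~\ref{5108}: the Krukenberg-style analysis of how residue classes modulo small primes must be distributed, the Dalton--Trifonov refinements, the author's earlier work \cite{Memoire}, and the McNew--Setty idea. Concretely, after reducing modulo the $2$-part and $3$-part of $L$, one tracks how much of each residue class modulo $2^a 3^b$ (where $2^a \| L$, $3^b \| L$) can be covered by moduli divisible by higher primes; since such moduli are large, they contribute little, and one shows the small-modulus classes alone cannot cover everything unless $L$ is sufficiently divisible. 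The moduli $7$ and $14$ and $21$ play a special role: if $7 \nmid L$, the smallest usable moduli are $6, 8, 9, 10, 12, 15, \ldots$, and a careful accounting (essentially: the ``cost'' of covering a full residue class modulo $2^a 3^b$ using only these) shows $L \geq 5040$ fails, so $7 \mid L$ is forced; similar forcing arguments pin down $2^4 \mid L$ and $3^2 \mid L$ and $5 \mid L$.

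The likely structure is a finite case analysis: establish lower bounds $v_2(L) \geq 4$, $v_3(L) \geq 2$, $5 \mid L$, $7 \mid L$ one at a time, each by assuming the contrary and deriving that the density of covered integers is $< 1$, possibly after a local argument modulo a small prime power to handle the borderline cases where the crude bound is not quite enough. Combining these four divisibility conclusions gives $5040 \mid L$, hence $L \geq 5040$; and Krukenberg's explicit covering system with $m = 6$ and $L = 5040$ shows this is attained, so $5040$ is best possible as claimed in the abstract.

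The hard part will be the borderline values of $L$ just below $5040$ where $\sum_{d \mid L,\, d \geq 6} 1/d \geq 1$ but no covering system actually exists — for these the density argument gives no contradiction and one genuinely needs the covering-specific combinatorics (the interplay of residue classes modulo the primes $2,3,5$, and the observation that a class cannot be "double-covered" too efficiently). Managing this case analysis so that it is both complete and not unwieldy — ideally by finding the right invariant, perhaps a weighted density that penalizes overlaps, in the spirit of McNew--Setty — is where the real work lies; the prime-by-prime forcing and the final assembly are comparatively routine once that machinery is in place.
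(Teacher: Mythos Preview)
Your first step --- using the Mirsky--Newman inequality $\sum_{d\mid L,\,d\ge 6} 1/d > 1$ to cut down to a finite list of candidate $L<5040$ --- matches the paper exactly. The gap is in what comes next. You propose to force $v_2(L)\ge 4$, $v_3(L)\ge 2$, $5\mid L$, $7\mid L$ one by one via density plus ``local arguments modulo a small prime power,'' concluding $5040\mid L$. But the candidates that survive the density test include $L=3240=2^3\cdot 3^4\cdot 5$ (no $7$, and $v_2=3$), $L=4320=2^5\cdot 3^3\cdot 5$ (no $7$), $L=3024=2^4\cdot 3^3\cdot 7$ (no $5$), $L=3528=2^3\cdot 3^2\cdot 7^2$ (no $5$), and so on. These survive \emph{precisely because} their density sums exceed $1$, so no density argument --- weighted or otherwise --- will rule them out, and the vague ``local argument'' you gesture at is not a known technique capable of closing these cases short of actually solving the covering problem for each one. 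Your intended conclusion $5040\mid L$ is also strictly stronger than the theorem and is not what the paper proves.

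More fundamentally, you have misread what the ``McNew--Setty idea'' is: it is not a weighted density or overlap-penalizing invariant, but the observation that an integer covering problem can be encoded as a binary integer program and fed to a MIP solver. The paper's actual proof reduces (after the density cut and a divisibility/Lemma~\ref{smallerp} reduction) to sixteen specific values of $L$, each of which is then shown infeasible by Gurobi; the hardest, $L=2520$, takes about 2.5 hours of solver time. There is no theoretical case analysis of the kind you sketch --- the ``real work'' you anticipate at the end is done by the computer, and your proposal contains no substitute for it.
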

We also conjecture that the $m=7$ case is best possible. 

\begin{conj}
If the smallest modulus of a covering system with distinct moduli is 7, then the least common multiple of the moduli is at least $15120$. 
\end{conj}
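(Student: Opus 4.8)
The plan is to push the method behind Theorems~\ref{51440} and~\ref{65040} one level further. Assume for contradiction that $\CS=\{a_1\bmod m_1,\dots,a_n\bmod m_n\}$ is a distinct covering system with $\min_i m_i=7$ and $L:=\lcm(m_1,\dots,m_n)<15120=2^4\cdot 3^3\cdot 5\cdot 7$. Since $7$ is one of the moduli we have $7\mid L$, every $m_i$ divides $L$ and is at least $7$, and covering forces $1\le\sum_i 1/m_i\le\sum_{d\mid L,\ d\ge 7}1/d$. The first task is to pin down the shape of $L$: bounding the right-hand sum by the Euler product $\prod_{p\mid L}(1-1/p)^{-1}$ minus the contribution of the divisors below $7$, one checks (a short, if slightly delicate, computation — the extremal cases such as $L=12320$ land just under $1$) that $2\mid L$ and $3\mid L$. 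Hence $L=2^a3^b7^d\cdot(5^c11^e13^f\cdots)$ with $a,b,d\ge 1$, and the inequality $L<15120$ leaves only an explicit finite list of candidates, among them the near-extremal $2^4\cdot 3^3\cdot 7=3024$, $2^4\cdot 3^2\cdot 5\cdot 7=5040$, $2^3\cdot 3^3\cdot 5\cdot 7=7560$, $2^5\cdot 3^2\cdot 5\cdot 7=10080$, $2^6\cdot 3^3\cdot 7=12096$, together with a handful divisible by $11$ or $13$, such as $2^6\cdot 3\cdot 7\cdot 11=14784$.

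For each candidate $L$, the core is a level-by-level (``peeling'') analysis. Given a prime power $p^k\|L$, let $\CS_p$ be the sub-collection of those $m_i$ with $p^k\mid m_i$, so $m_i=p^km_i'$ with $m_i'\mid L/p^k$ and $p\nmid m_i'$. Fixing a residue $c\bmod p^k$, a progression $a_i\bmod m_i$ with $p^k\nmid m_i$ covers at most a proportion $p^{v_p(m_i)}/m_i$ of the residues $\equiv c\pmod{p^k}$, while a progression from $\CS_p$ covers either none of them or a proportion $1/m_i'$; since $\CS$ is a covering, for every $c$ the $\CS_p$-members compatible with $c$ must collectively make up whatever proportion is not already supplied by the lower-valuation progressions. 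Applied to the least-covered residue $c$, this forces $\sum_{m_i'\in\CS_p}1/m_i'$ to be large, and since the $m_i'$ are distinct divisors of $L/p^k$ each of size at least $\lceil 7/p^k\rceil$, one obtains strong constraints on the divisor structure of $L/p^k$. This is where the several ingredients enter: Krukenberg's lower bounds for the smallest modulus inside such a coset, the refined modulus counts of Dalton and Trifonov \cite{Dalton}, the bounds of \cite{Memoire}, and the recent idea of McNew and Setty \cite{McNew} each supply an inequality on how many distinct divisors $m_i'$ the collection $\CS_p$ must contain. Iterating over $p=2,3,5,7$ in the order that extracts the most information — typically starting with the prime of largest exponent — one expects, for each candidate $L$, a system of inequalities on the counts of available divisors of $L$ that is infeasible unless $L\ge 15120$.

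The hard part, and the reason this remains a conjecture rather than a theorem, is the volume of near-extremal cases. For $L$ only slightly below $15120$ — the candidates $3024$, $5040$, $7560$, $10080$, $12096$, and the ones divisible by $11$ or $13$ — the crude reciprocal-sum and density bounds are not decisive (for instance the divisors $\ge 7$ of $14784$ already have reciprocal sum exceeding $1$), so one must track the covered residues explicitly modulo the small-prime part of $L$, exactly as Dalton and Trifonov do for minimum modulus $4$ and $5$ and as in the proofs of Theorems~\ref{51440} and~\ref{65040}. Raising the minimum modulus from $6$ to $7$ enlarges the list of prime-power combinations below the target lcm considerably, and it is this explosion in bookkeeping — rather than any missing idea — that I expect to be the principal obstacle; a careful, and probably partially computer-assisted, enumeration along those lines should close the conjecture.
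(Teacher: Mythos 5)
Note first that the statement you were asked about is stated in the paper only as a conjecture: the paper offers no proof of it, so there is nothing to compare your argument against except the method used for Theorems~\ref{51440} and~\ref{65040}. Measured against that, your proposal is a reasonable \emph{program} but not a proof, and you say so yourself. The parts you actually carry out are the easy filters: Mirsky--Newman (Lemma~\ref{Mirsky}) plus the divisor-sum computation pinning down the shape of $L$ (your numerics here check out; e.g.\ $L=12320$ does fall just short, and $3024$, $5040$, $7560$, $10080$, $12096$, $14784$ do survive the reciprocal-sum test). The decisive step --- showing that \emph{each} surviving candidate $L$ admits no distinct covering with all moduli among the divisors of $L$ that are $\geq 7$ --- is never performed. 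Your middle paragraph replaces it with a ``peeling'' heuristic whose conclusion (``a system of inequalities \dots\ that is infeasible unless $L\geq 15120$'') is asserted, not derived, and your own final paragraph concedes that these density/reciprocal-sum bounds are not decisive exactly for the near-extremal candidates, which are the only cases that matter. In the framework of this paper those cases are closed only by explicit exhaustive verification (the integer-programming reductions of Section~\ref{integerprogramming} run through Gurobi), and that computation has not been done here.

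Two smaller concrete points. First, for minimum modulus $7$ you always have $7\mid L$, so Lemma~\ref{smallerp} cannot be invoked with $p=7$ to strip the prime $11$ from a candidate (it requires $p\nmid L$); thus candidates such as $14784=2^6\cdot3\cdot7\cdot11$, which pass the density test, genuinely require direct treatment, unlike the analogous eliminations of $990$, $1320$ in the $m=5$ case --- your ``handful divisible by $11$ or $13$'' cannot be dismissed by the lemmas in this paper. Second, the list of inequalities you attribute to Krukenberg, Dalton--Trifonov, \cite{Memoire} and \cite{McNew} conflates tools of different kinds: the McNew--Setty contribution used in this paper is precisely the ILP formulation, not an inequality on divisor counts, so it cannot be folded into the ``system of inequalities'' you describe. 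In short: the reductions are sound, but the core case analysis is missing, so the statement remains (as in the paper) a conjecture.
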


The paper is organized as follows. In Section \ref{lemmas}, we go over some preliminary lemmas. In Section \ref{integerprogramming}, we provide details on the idea of McNew and Setty mentioned above. In Section \ref{5108proof}, we prove Theorem~\ref{5108}. In Section \ref{51440proof}, we prove Theorem~\ref{51440}. In Section \ref{65040proof}, we prove Theorem~\ref{65040}. In Section \ref{6192proof}, we prove Theorem~\ref{6192}.

\section{Preliminary lemmas}\label{lemmas}

In this section, we state a few lemmas that will be useful in the proofs of Theorem~\ref{5108}, Theorem~\ref{51440}, and Theorem~\ref{65040}. When possible, we do not prove the lemmas and instead refer to their proofs in the literature. The first two lemmas we look at are fundamental in the work of Krukenberg \cite{Krukenberg} and Dalton and Trifonov \cite{Dalton}. 

\begin{lem}[\cite{Krukenberg}, Corollary 2.3; \cite{Dalton}, Corollary 8]\label{Krukenberglemma1}
Suppose $C$ is a distinct covering system with all moduli $\leq B$. Let $p$ be a prime and $a$ be a positive integer. If $p^a(p+1)>B$, we may discard all arithmetic progressions with modulus divisible by $p^a$ from $C$ and still have a covering system. 
\end{lem}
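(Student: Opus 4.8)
The plan is to argue by a counting/density argument on a full period. Let $C$ be a distinct covering system with all moduli $\leq B$, let $L$ be the least common multiple of all the moduli, and suppose $p^a(p+1) > B$. Write $C = C_0 \cup C_1$, where $C_1$ is the set of progressions in $C$ whose modulus is divisible by $p^a$, and $C_0$ is the rest. I want to show $C_0$ already covers $\Z$. Suppose not; then there is a residue class modulo $L$ not covered by $C_0$, so the set $U \subseteq \Z/L\Z$ of integers missed by $C_0$ is nonempty, and since $C$ covers, $U$ is covered entirely by the progressions in $C_1$. Each modulus $m$ appearing in $C_1$ is divisible by $p^a$ but, being $\leq B < p^a(p+1)$, is not divisible by $p^{a+1}$ (since $p^{a+1} \geq p^a(p+1) - p^a \cdot \text{something}$—more precisely $p^{a+1} \le p^a(p+1)$ would need care, so instead use $m \le B$ and $p^a \mid m$ forces $m/p^a \le B/p^a < p+1$, hence $m/p^a \le p$). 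Thus for each such $m$, exactly one prime power of $p$ dividing $m$ is $p^a$ itself, and $m = p^a \cdot m'$ with $m' \le p$ and $\gcd$-structure that I will exploit.

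The key step is the following substitution idea, which is exactly Krukenberg's trick. Fix the prime $p$. Partition $\Z/L\Z$ according to residues modulo $p^a$. A progression $a' \mod m$ with $p^a \mid m$ lies inside a single residue class mod $p^a$; within that class (which is a copy of $\Z/(L/p^a)\Z$ after scaling) it occupies a progression of modulus $m/p^a \le p$. Now the point is that there are at most $p-1$ distinct such moduli $m/p^a$ available, namely the divisors of... — wait, more carefully: the moduli in $C_1$ are distinct, all divisible by $p^a$, all $\le B$, so they are $p^a$ times distinct integers each $\le p$; there are at most $p$ such, but one of them would be $p^a$ itself (modulus $= p^a$) and the others $p^a j$ for $2 \le j \le p$. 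The total density covered by $C_1$ is therefore at most $\sum 1/m$ over these moduli, which is at most $\frac{1}{p^a}\big(1 + \frac12 + \cdots + \frac1p\big) < \frac{1}{p^a}\cdot \text{(something)}$. I would then compare this to the density of $U$: because $C_0$ fails to cover and the uncovered set is a union of residue classes mod $L$, and I can show $U$ must meet every residue class mod $p^a$ with density at least $1/p^a$ in a suitable sense — this is where the distinctness and the minimality of what's left must be combined with the classical observation that if the progressions of $C_1$ covered $U$ they would have to cover a full residue class mod $p^a$ somewhere, forcing $\sum_{m \in C_1,\, m \equiv 0} 1/(m/p^a) \ge 1$, i.e. $1 + \frac12 + \cdots + \frac1p \ge p^a$, which is false for any prime $p$ and $a \ge 1$.

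I expect the main obstacle to be making precise the claim that the surviving progressions in $C_1$, restricted to a single residue class mod $p^a$ that is not fully covered by $C_0$, must themselves sum to density $\ge 1$ there — one has to rule out the possibility that different residue classes mod $p^a$ are "rescued" by different sub-collections, and handle the interaction with moduli in $C_0$ that are also divisible by $p$ but by a lower power. The clean way around this is the standard covering-system reduction: look at one fixed residue $r \bmod p^a$ on which $C_0$ leaves something uncovered; the progressions of $C$ that can cover integers $\equiv r \pmod{p^a}$ and have modulus divisible by $p^a$ all have modulus in $\{p^a, 2p^a, \ldots, (p-1)p^a, p\cdot p^a\}\cap[1,B]$, hence reduce mod $L/p^a$ to progressions of modulus dividing $p$; since $\sum_{j=1}^{p} \frac{1}{j} < p \le p^a$ for... hmm, actually $\sum_{j=1}^p 1/j$ can exceed nothing relevant, so the correct inequality to cite is simply that these finitely many progressions of moduli among $\{1, 2, \dots, p\}$ (after rescaling by $p^a$, using $1$ only if $p^a \le B$, which holds) cannot cover $\Z/(L/p^a)\Z$ because a distinct covering system with all moduli $\le p$ and minimum modulus $> 1$ does not exist — and here modulus $1$ cannot occur since $C$ is distinct and presumably contains no modulus-$1$ progression, or if it does the lemma is trivial. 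I would therefore finish by invoking: \emph{no distinct covering system has all moduli in $\{2,3,\ldots,p\}$}, which follows from $\sum_{j=2}^{p} 1/j < 1$ being false in general — so in fact I should instead track that the relevant rescaled moduli are not an arbitrary subset of $\{1,\dots,p\}$ but are constrained so their reciprocal sum is $< 1$; pinning down exactly this constraint (it comes from $B < p^a(p+1)$ giving $m/p^a \le p$, together with the moduli being distinct multiples of $p^a$) is the crux, and then the contradiction $\sum 1/(m/p^a) \ge 1$ versus $\le \sum_{j=1}^{p} 1/j$ is resolved by noting the $j=1$ term is the progression of modulus exactly $p^a$, which, combined with at most one progression of each modulus $2p^a, \dots, pp^a$, still must sum to $\ge 1$ to cover, and a short case check on small $p$ (the only primes for which $1 + 1/2 + \cdots + 1/p$ could plausibly reach $1$ trivially does, so the real content is that the \emph{offsets} must be consistent) completes the argument. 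For the writeup I would simply cite \cite{Krukenberg} and \cite{Dalton} as the excerpt already does, and reproduce their short argument.
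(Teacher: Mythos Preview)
The paper does not prove this lemma; it cites Krukenberg and Dalton--Trifonov and moves on. Your proposal ends up in the same place (``I would simply cite\ldots''), but the argument you sketch beforehand has a genuine gap. The density approach cannot close: the moduli in $C_1$ are distinct elements of $\{p^a, 2p^a, \ldots, p\cdot p^a\}$, so there may be as many as $p$ of them, and $\sum_{j=1}^{p} 1/j \ge 1$ always. Hence the hoped-for inequality ``reciprocal sum $<1$'' is simply false here, and no contradiction arises from density alone---as you yourself notice and never resolve. Fixing a single residue class $r \bmod p^a$ does not help either: if the progression of modulus exactly $p^a$ happens to lie in that class, it already covers the whole class by itself. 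Your remark that ``the real content is that the offsets must be consistent'' gestures at the right idea but is not developed into anything usable.

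The argument in the cited sources is a pigeonhole across residue classes, carried out in two steps. First, at most one modulus in $C$ is divisible by $p^{a+1}$ (only $p^{a+1}$ itself, since $2p^{a+1} > p^a(p+1) > B$), and that single progression can be discarded: if some $n$ were uncovered by the remaining progressions, pick $M'$ coprime to $p$ with every remaining modulus dividing $p^{a}M'$; the $p$ integers $n + j p^{a} M'$ for $0 \le j \le p-1$ are all still uncovered and lie in $p$ distinct classes modulo $p^{a+1}$, while the lone discarded progression occupies just one such class---contradiction. After this step the set $C_1$ of progressions with modulus divisible by $p^a$ has $|C_1|\le p-1$. Now repeat one level down: if some $n$ is uncovered by $C_0 = C\setminus C_1$, choose $M'$ coprime to $p$ with every modulus of $C_0$ dividing $p^{a-1}M'$; the $p$ integers $n + j p^{a-1} M'$ lie in $p$ distinct classes modulo $p^{a}$, each of the $\le p-1$ progressions in $C_1$ lies in a single such class, and pigeonhole yields the contradiction. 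This two-stage pigeonhole is the missing ingredient in your sketch.
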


\begin{lem}[\cite{Dalton}, Corollary 9]\label{Krukenberglemma2}
Let $C$ be a covering system, and suppose the least common multiple of the moduli in $C$ is $L$. Let $C_{p^a}$ be the subset of $C$ containing all arithmetic progressions with modulus divisible by $p^a$, and suppose that $|C_{p^a}|=p$. Suppose further that the moduli in $C_{p^a}$ are $p^am_1,\ldots,p^am_p$. Then one can replace the arithmetic progression in $C_{p^a}$ by a single arithmetic progression with modulus $p^{a-1}\lcm(m_1,\ldots,m_p)$, and the resulting set of congruences will still be a covering system. 
\end{lem}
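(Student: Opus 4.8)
The plan is to show that the set of integers missed by the congruences \emph{outside} $C_{p^a}$ is so highly structured that a single congruence of modulus $p^{a-1}L'$, with $L':=\lcm(m_1,\ldots,m_p)$, can absorb it. Write $D:=C\setminus C_{p^a}$ and let $U:=\Z\setminus\bigcup_{\gamma\in D}\gamma$ be the set of integers not covered by $D$. Since $C$ covers $\Z$, the union of the congruences in $C_{p^a}$ contains $U$; conversely, $D\cup\{b \bmod p^{a-1}L'\}$ is again a covering system the moment $U\subseteq b\bmod p^{a-1}L'$ for some $b$. So it suffices to prove that $U$ lies inside a single residue class modulo $p^{a-1}L'$ (this class can be taken arbitrary if $U=\varnothing$).

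I would prove this by reducing modulo $L$, the least common multiple of \emph{all} moduli in $C$, and splitting $\Z/L\Z$ by the Chinese Remainder Theorem into its $p$-primary part $\Z/p^{\alpha}\Z$ (where $\alpha:=v_p(L)\ge a$) times its prime-to-$p$ part. Call a \emph{fibre} any set of residues mod $L$ obtained by fixing the prime-to-$p$ coordinate and fixing the $p$-primary coordinate modulo $p^{a-1}$; each fibre has $p^{\alpha-a+1}$ residues. The first point is that every congruence of $D$, having modulus not divisible by $p^a$, either contains a given fibre entirely or is disjoint from it; hence the set covered by $D$ is a union of fibres, and so is $U$, and every fibre meeting $U$ must be covered \emph{in full} by the union of $C_{p^a}$.

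The second, and main, step is a counting argument on a single fibre $F$. Writing $m_i=p^{c_i}m_i^{\ast}$ with $p\nmid m_i^{\ast}$, a direct computation shows that the congruence $a_i\bmod p^{a}m_i$ meets $F$ either in the empty set or in a subprogression of relative density exactly $p^{-(c_i+1)}$ inside $F$. Since $\sum_{i=1}^{p}p^{-(c_i+1)}\le p\cdot p^{-1}=1$, with equality only when every $c_i=0$, the requirement that the $p$ congruences of $C_{p^a}$ cover all of $F$ forces $c_i=0$ for all $i$, forces every one of them to meet $F$, and forces the $p$ pieces they cut out of $F$ to partition it. Decoding this partition condition: all the $a_i$ must be congruent to one another modulo $p^{a-1}$ — to a common value $r_0$, which is therefore the \emph{same} for every fibre meeting $U$ — and pairwise distinct modulo $p^{a}$, while "meeting $F$" forces the congruences $\beta\equiv a_i\pmod{m_i^{\ast}}$ to hold simultaneously, pinning the prime-to-$p$ coordinate of $F$ to a single residue class modulo $\lcm(m_i^{\ast})=L'$ (here $p\nmid L'$). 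Consequently every fibre meeting $U$ lies in the one residue class modulo $\lcm(p^{a-1},L')=p^{a-1}L'$ determined by $r_0$ and by that prime-to-$p$ class, so $U$ lies there too. If instead these forcing conditions cannot all be met — in particular whenever some $m_i$ is divisible by $p$, where the density sum is already $<1$ — then no fibre can be covered in full, $U=\varnothing$, and $D$ by itself covers $\Z$, so any $b$ works.

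The step I expect to require the most care is the second one: checking the relative-density computation on a fibre and then reading off from "the $p$ pieces partition the fibre" the precise congruence conditions on the $a_i$ modulo $p^{a-1}$ and $p^{a}$ and on the prime-to-$p$ residues; the fibre decomposition in the first step and the final reassembly into one residue class are then routine Chinese-remaindering.
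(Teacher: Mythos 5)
The paper does not supply its own proof of this lemma; it cites Dalton and Trifonov \cite{Dalton}, Corollary 9, so there is no in-paper argument to compare against line by line. Your proof is correct. The CRT fibre decomposition (fixing the prime-to-$p$ coordinate and the $p$-coordinate modulo $p^{a-1}$), the observation that every congruence of $D$ is either disjoint from or contains each fibre so that $U$ is a union of fibres, the computation that $a_i \bmod p^a m_i$ meets a fibre in either nothing or a set of relative density exactly $p^{-(c_i+1)}$, and the equality case of the union bound forcing $c_i=0$ for all $i$, the $a_i$ to agree modulo $p^{a-1}$ and be pairwise distinct modulo $p^a$, and the prime-to-$p$ coordinate of any fibre in $U$ to be pinned modulo $\lcm(m_i^*)=L'$, all check out; the final CRT reassembly into one class modulo $p^{a-1}L'$ (with the degenerate case $U=\varnothing$ when some $c_i>0$) is sound. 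This CRT-times-density style is the standard one in the Krukenberg and Dalton--Trifonov line of work, so your route is almost certainly the intended one rather than a genuinely different approach.
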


The next result we will be using concerns translating a covering system. For a proof, it follows from Lemma 2.2 in \cite{Filaseta}. 

\begin{lem}\label{translation}
Let 
\[C=\{a_1 \mod{m_1}, \ldots, a_n \mod{m_n}\}\]
be a covering system, and let $t \in \Z$. Then
\[C+t:=\{a_1+t \mod{m_1},\ldots,a_n+t \mod{m_n}\}\]
is also a covering system. 
\end{lem}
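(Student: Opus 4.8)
The plan is to verify the covering property of $C+t$ directly from the definition, rather than quoting \cite{Filaseta}; the argument is short enough that this costs nothing. Recall that $C$ being a covering system means precisely that for every integer $x$ there is an index $i\in\{1,\ldots,n\}$ with $x\equiv a_i\mod{m_i}$, i.e.\ $m_i\mid(x-a_i)$.

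First I would fix an arbitrary integer $x$ and apply the covering property of $C$ not to $x$ but to the shifted integer $x-t$: this produces an index $i$ with $m_i\mid\big((x-t)-a_i\big)$. Since $(x-t)-a_i=x-(a_i+t)$, the same divisibility reads $m_i\mid\big(x-(a_i+t)\big)$, that is, $x\equiv a_i+t\mod{m_i}$, so $x$ lies in the $i$-th progression of $C+t$. As $x$ was arbitrary, every integer is covered by $C+t$. Finally I would observe that $C+t$ has exactly the same list of moduli $m_1,\ldots,m_n$ as $C$, so if $C$ has distinct moduli then so does $C+t$.

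I do not expect any genuine obstacle here: the statement is just the observation that the translation map $x\mapsto x+t$ is a bijection of $\Z$ carrying the residue class $a\mod m$ onto $a+t\mod m$. The only point worth emphasising is that the hypothesis ``distinct moduli'' is preserved verbatim, since the moduli are literally unchanged; this is what makes the lemma convenient in the later sections, where one normalises a covering system by a translation (for instance to assume that $0\mod{m_1}$ belongs to it for the least modulus $m_1$) without disturbing any hypothesis stated in terms of the moduli.
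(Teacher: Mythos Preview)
Your proposal is correct: the direct argument --- shift $x$ by $-t$, invoke the covering property of $C$, and translate back --- is exactly the natural proof. The paper itself gives no argument at all, simply noting that the lemma follows from Lemma~2.2 in \cite{Filaseta}; so your write-up is in fact more self-contained than what the paper does, while costing only a couple of lines. Your added observation that the multiset of moduli is preserved (hence distinctness is preserved) is not part of the statement as written but is harmless and indeed relevant to how the lemma is applied later.
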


The next lemma we look at comes from the author's Master's thesis \cite{Memoire}. 

\begin{lem}[\cite{Memoire}, Lemma 2.13]\label{pauto}
Let $C$ be a covering system, $L$ be the least common multiple of the moduli in $C$, and $p$ be a prime dividing $L$. Let $0 \leq a_1<a_2 \leq p-1$ be integers, and for $\alpha \in \{0,1,\ldots,p-1\}$, let 
\[C_p(\alpha):=\{a \mod{m} \in C: p|m, a \equiv  \alpha \mod p\}.\]
Let $t$ be the unique integer in $[0,L)$ such that $t \equiv  (a_2-a_1) \mod{p^{\nu_p(L)}}$ and $t \equiv 0 \mod{q^{\nu_q(L)}}$ for each prime $q|L$, $q \neq p$. Then

\[C':=\bigg(C \setminus(C_p(a_1) \cup C_p(a_2))\bigg) \cup (C_p(a_1)+t) \cup (C_p(a_2)-t)\]
is a covering system. 
\end{lem}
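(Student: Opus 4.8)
The plan is to exhibit a bijection $\phi$ of $\Z/L\Z$ that maps the set of residues covered by $C$ onto the set covered by $C'$. First I would observe that every modulus occurring in $C'$ divides $L$: the progressions lying outside $C_p(a_1)\cup C_p(a_2)$ are untouched, and translating a progression does not change its modulus. Hence $C'$ is a covering system as soon as it covers all of $\Z/L\Z$. If $\phi$ is onto and $C$ covers everything, then any given residue $r\in\Z/L\Z$ can be written $r=\phi(n)$ with $n$ lying in some progression of $C$, so it is enough to check that $\phi(n)$ then lies in the ``corresponding'' progression of $C'$ (the image of that progression under the replacement defining $C'$).

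Concretely, writing $L=p^{e}L'$ with $e=\nu_p(L)$ and $\gcd(p,L')=1$, the defining congruences for $t$ say exactly that $t\equiv a_2-a_1\pmod{p^{e}}$ and $t\equiv 0\pmod{L'}$. I would then set, for $n\in\Z/L\Z$,
\[
\phi(n)=\begin{cases} n+t, & n\equiv a_1\pmod p,\\ n-t, & n\equiv a_2\pmod p,\\ n, & \text{otherwise}.\end{cases}
\]
This is well defined on $\Z/L\Z$ since $p\mid L$. Because $t\equiv a_2-a_1\pmod p$, translation by $t$ carries the residue class $a_1\bmod p$ bijectively onto the class $a_2\bmod p$, translation by $-t$ carries $a_2\bmod p$ bijectively onto $a_1\bmod p$, and these two maps are mutually inverse on the union of the two classes; as $\phi$ is the identity on all remaining classes, $\phi$ is a bijection (indeed an involution) of $\Z/L\Z$.

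It then remains to verify the covering claim by a short case analysis on a progression $a\bmod m\in C$ containing $n$. If $p\mid m$ then $n\equiv a\pmod p$, and the corresponding progression in $C'$ is $a+t\bmod m$, $a-t\bmod m$, or $a\bmod m$ according to whether $a\equiv a_1$, $a\equiv a_2$, or neither modulo $p$; in each subcase $\phi(n)$ differs from $n$ by the very same shift ($+t$, $-t$, or $0$), so $\phi(n)$ lies in that progression because $n$ does. If $p\nmid m$, the progression is unchanged and one only needs $\phi(n)\equiv n\pmod m$; this is trivial when $n$ lies in a residue class other than $a_1,a_2\bmod p$, and when $n\equiv a_1$ or $a_2\pmod p$ it follows from $m\mid L'$ (since $m\mid L$ and $\gcd(m,p)=1$) together with $t\equiv 0\pmod{L'}$. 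The argument has no serious obstacle; the only points requiring care are the bookkeeping that $t$ and $-t$ reduce correctly modulo $p$ (so that $\phi$ permutes residue classes and is onto) and modulo the other prime powers (so that progressions with $p\nmid m$ are genuinely unaffected), and the observation that a progression whose modulus is divisible by $p$ but whose residue is $\not\equiv a_1,a_2\pmod p$ is left in place and still contains $\phi(n)=n$.
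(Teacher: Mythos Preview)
Your proof is correct. The organization differs from the paper's: rather than building a bijection, the paper fixes an integer $b$, splits into cases according to whether $b$ is covered in $C$ by a progression outside $C_p(a_1)\cup C_p(a_2)$, by one in $C_p(a_1)$, or by one in $C_p(a_2)$, and in the latter two cases appeals to what covers $b+t$ (respectively $b-t$) in $C$ to locate a progression of $C'$ containing $b$. Your approach instead packages the swap of the residue classes $a_1,a_2\bmod p$ as an explicit involution $\phi$ of $\Z/L\Z$ and transports coverage forward along $\phi$; this avoids the paper's secondary lookup ``what covers $b+t$?'' and makes the role of the CRT definition of $t$ more transparent. The underlying arithmetic facts (that $t\equiv a_2-a_1\pmod{p}$ interchanges the two classes and that $t\equiv 0$ modulo every modulus coprime to $p$) are identical in both arguments, so the difference is one of presentation rather than substance; your version is a clean structural repackaging of the same idea.
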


\begin{proof}
Let $b \in \Z$. We consider three cases for what covers $b$ in $C$:
\begin{enumerate}
\item $b$ is covered by some arithmetic progression in $C \setminus(C_p(a_1) \cup C_p(a_2))$, 
\item $b$ is covered by some arithmetic progression in $C_p(a_1)$,
\item $b$ is covered by some arithmetic progression in $C_p(a_2)$.
\end{enumerate}

In case (a), it is clear that $b$ is covered in $C'$. Suppose now that we are in case (b). We look at what covers $b+t$ in $C$. If $b+t$ is covered by some arithmetic progression in $C \setminus(C_p(a_1) \cup C_p(a_2))$, then there is some $a \mod m \in C \setminus(C_p(a_1) \cup C_p(a_2))$ such that $b+t \equiv a \mod m$. Note that $p \nmid m$, as or else we would have $a \mod m \in C_p(a_2)$. This implies that $b+t \equiv b \mod m$, and that $b$ is covered in $C'$. We may now suppose that $b+t$ is covered by some arithmetic progression in $C_p(a_2)$, so that there exists some $a \mod m \in C_p(a_2)$ such that $b+t \equiv a \mod m$. It follows that $b \equiv a-t \mod m$, and that $b$ is covered by some arithmetic progression in $C_p(a_2)-t$, so that $b$ is covered in $C'$. 

Case (c) follows in a similar fashion as case (b), and so we omit the details. This completes the proof of the lemma. 
\end{proof}

The next lemma we look at is a rather famous theorem of Mirsky and Newman, see \cite{Mirsky} for a proof. 

\begin{lem}[Mirsky and Newman]\label{Mirsky}
Suppose 
\[C=\{a_1 \pmod{m_1}, \ldots, a_n \pmod{m_n}\}\]
is a distinct covering system. Then
\[\sum_{i=1}^n \frac{1}{m_i}>1.\]
\end{lem}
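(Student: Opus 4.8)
The plan is to deduce the strict inequality from the distinctness hypothesis in two stages: first reduce to the classical fact that no \emph{exact} distinct covering system exists, and then prove that fact by the standard generating-function argument with roots of unity. Throughout I would assume every modulus is at least $2$, which costs nothing in our setting (if some $m_i=1$ the conclusion is immediate as soon as $n\ge 2$, and the degenerate system $\{0\bmod 1\}$ does not arise in any of our applications).

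For the reduction, suppose for contradiction that $\sum_{i=1}^n 1/m_i\le 1$ and set $L=\lcm(m_1,\ldots,m_n)$. Each progression $a_i\bmod m_i$ is a union of exactly $L/m_i$ residue classes modulo $L$, and since $C$ covers $\Z$ these classes cover $\Z/L\Z$; hence $\sum_i L/m_i\ge L$. Combined with the standing assumption this forces $\sum_i 1/m_i=1$, and then the $\sum_i L/m_i=L$ residue classes modulo $L$ must be pairwise disjoint and exhaust $\Z/L\Z$. Because membership of an integer $k$ in $a_i\bmod m_i$ depends only on $k\bmod L$, it follows that every integer lies in exactly one progression of $C$; that is, $C$ is an exact covering system.

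Next I would run the Mirsky–Newman argument on this exact cover. Normalising $0\le a_i<m_i$ and summing geometric series over the nonnegative integers gives $\sum_{i=1}^n z^{a_i}/(1-z^{m_i})=\sum_{k\ge 0}z^k=1/(1-z)$ for $|z|<1$, since each $k\ge 0$ is the exponent appearing in exactly one term on the left. Let $m=\max_i m_i$; by distinctness a unique index $i_0$ attains it, and every other $m_i$ satisfies $m_i<m$, hence $m\nmid m_i$. Fixing a primitive $m$th root of unity $\zeta$ and letting $z=r\zeta$ with $r\to 1^-$, the term indexed by $i_0$ has modulus $r^{a_{i_0}}/(1-r^{m})\to\infty$, while every other term stays bounded (its denominator tends to $1-\zeta^{m_i}\ne 0$) and the right-hand side tends to $1/(1-\zeta)\ne 0$ because $\zeta\ne 1$. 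This contradiction establishes $\sum_i 1/m_i>1$.

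I expect the only delicate point to be the reduction in the second paragraph: the inequality $\sum 1/m_i\le 1$ produces a genuine exact cover \emph{only} because the covering-multiplicity function $k\mapsto\#\{i:k\equiv a_i\bmod m_i\}$ is periodic modulo $L$, so being everywhere at least $1$ and having average $1$ over one full period forces it to be identically $1$. Once an honest exact cover is in hand, the root-of-unity step is entirely routine; alternatively one could simply cite \cite{Mirsky} for it, but the argument is short enough to include.
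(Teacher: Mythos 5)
Your proof is correct. The paper itself does not reprove Lemma~\ref{Mirsky}; it simply cites the Erd\H{o}s reference \cite{Mirsky}, so there is no in-paper argument to compare against. What you have reconstructed is the classical Mirsky--Newman--Erd\H{o}s argument: the reduction via the periodic multiplicity function (average $1$ over a full period plus pointwise $\geq 1$ forces an exact cover), followed by the generating-function identity $\sum_i z^{a_i}/(1-z^{m_i}) = 1/(1-z)$ and the blow-up at a primitive root of unity of order $m=\max_i m_i$, using distinctness to isolate a single unbounded term. Both halves are sound; in particular your observation that $m\nmid m_i$ for $i\neq i_0$ is exactly what keeps the other denominators away from zero, and the right-hand side stays bounded and nonzero because $\zeta\neq 1$.

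One point worth flagging, which you already handle informally: as stated, the lemma fails for the degenerate distinct covering system $\{0\bmod 1\}$, where $\sum 1/m_i = 1$. Your standing assumption $m_i\ge 2$ (standard in covering-system work, and harmless in every application in this paper) is genuinely needed both to rule out this case and to guarantee $m\ge 2$ so that $\zeta\neq 1$ in the final step. If you were going to include the proof rather than cite it, it would be cleaner to state that hypothesis explicitly in the lemma.
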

Finally, we look at a lemma that will be useful in the proofs of Theorem~\ref{51440} and Theorem~\ref{65040}. This lemma is similar to Theorem 1 in \cite{Simpson}, and we defer to there for the proof. 

\begin{lem}\label{smallerp}
Let $m$ be a positive integer. Let $p$ be a prime $\geq m$. Let $L$ be a positive integer, and suppose that $p \nmid L$. Let $q>p$ be the largest prime dividing $L$. If there exists a distinct covering system using all divisors of $L$ that are $\geq m$, then there exists a distinct covering system using all divisors of $p^{\nu_q(L)}L/q^{\nu_q(L)}$ that are $\geq m$. 
\end{lem}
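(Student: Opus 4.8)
The plan is to follow the strategy of Simpson's Theorem~1 in \cite{Simpson}, adapting it to the constraint that all moduli used are $\geq m$. Suppose we have a distinct covering system $C$ using all divisors of $L$ that are $\geq m$. The idea is to replace the largest prime $q$ occurring in $L$ by the smaller prime $p$. More precisely, we want to transport the covering information stored on arithmetic progressions whose modulus is divisible by $q$ onto arithmetic progressions whose modulus is divisible by $p$ instead, producing a distinct covering system using all divisors of $L' := p^{\nu_q(L)} L / q^{\nu_q(L)}$ that are $\geq m$.

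First I would set up notation: write $e := \nu_q(L)$, and for each $j$ with $1 \leq j \leq e$, group the arithmetic progressions of $C$ according to the power of $q$ dividing their modulus and according to their residue modulo $q^j$. The key structural observation — the same one underlying Simpson's argument — is that among the arithmetic progressions with modulus divisible by $q^j$ but not $q^{j+1}$, the residues modulo $q$ (after dividing out the common factor) must, together with what is covered by the coarser progressions, account for all $q$ residue classes; since $C$ uses \emph{all} divisors of $L$, there is enough room to do a clean substitution. The replacement is essentially the one used in Lemma~\ref{Krukenberglemma2} applied prime-power layer by prime-power layer: each modulus $q^j d$ appearing in $C$ is rewritten as $p^j d$, and the residues are chosen (via CRT, exactly as in Lemma~\ref{pauto}) so that the covering property is preserved. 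Because $p \geq m$ and every divisor of $L$ that we rewrite had the form $q^j d$ with $q^j d \geq m$, and $p^j d \geq m$ is automatic once $j \geq 1$ (as $p \geq m$), while for the divisors $d \mid L/q^e$ nothing changes, the resulting system still uses exactly the divisors of $L'$ that are $\geq m$, and distinctness is preserved because $p \nmid L$ guarantees $p^j d$ are pairwise distinct and distinct from the untouched moduli.

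The step I expect to be the main obstacle is verifying that the substitution genuinely preserves the covering property across all $e$ layers simultaneously, rather than just for a single layer. One has to argue inductively on $j$ from $j = e$ down to $j = 1$: at layer $j$ the progressions with modulus exactly divisible by $q^j$ partition (a subset of) the integers not yet covered by higher layers into residue classes mod $q$, and one must check that after the rewriting these classes still interlock correctly with the $p$-adic structure. This is where the precise choice of residues — using the CRT element $t$ that is $\equiv$ the desired shift modulo the $p$-part and $\equiv 0$ modulo every other prime power, as in Lemma~\ref{pauto} — does the work, and where one must be careful that shifting progressions at layer $j$ does not disturb the already-arranged layers $j+1, \ldots, e$. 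Since this is precisely the content of \cite[Theorem~1]{Simpson} with the cosmetic addition of the ``$\geq m$'' bookkeeping (which, as noted, is immediate from $p \geq m$), I would carry out the layer-by-layer induction in outline and then defer the detailed verification to the cited reference, as the lemma statement already does.
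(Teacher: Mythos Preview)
The paper gives no proof of this lemma at all; it simply states that the result is ``similar to Theorem~1 in \cite{Simpson}'' and defers there. Your proposal ultimately does the same, so in that sense you are aligned with the paper. Your high-level plan (replace the largest prime $q$ by the smaller prime $p$, check that distinctness and the lower bound $\geq m$ survive) is the right one, and your observation that $p\geq m$ forces $p^{j}d\geq m$ whenever $j\geq 1$, while the $j=0$ divisors are unchanged, is exactly the bookkeeping needed to see that the map $q^{j}d\mapsto p^{j}d$ is a bijection between $\{d\mid L:d\geq m\}$ and $\{d\mid L':d\geq m\}$.

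Where your write-up goes astray is in the intermediate scaffolding. The invocations of Lemma~\ref{Krukenberglemma2} and Lemma~\ref{pauto} do not fit: Lemma~\ref{Krukenberglemma2} collapses $p$ progressions sharing a common $p^{a}$-factor into a single coarser one, and Lemma~\ref{pauto} swaps two residue classes modulo a \emph{fixed} prime already dividing $L$. Neither performs the operation ``rewrite modulus $q^{j}d$ as $p^{j}d$ and choose a compatible residue,'' and citing them suggests a mechanism that is not the one that actually works. The genuine content --- selecting, for each residue class at each $q$-layer, which $p$ of the $q$ branches to keep so that the covering property is preserved --- is precisely what lives in \cite{Simpson}, and your sketch does not supply it independently. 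Since the paper itself does not supply it either, the cleanest fix is to state the divisor bijection (which you have) and defer directly, dropping the references to Lemmas~\ref{Krukenberglemma2} and~\ref{pauto}.
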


\section{Integer programming and covering systems}\label{integerprogramming}

In this section, we look at a link between integer programming and covering systems, following work of McNew and Setty \cite{McNew}.

Let $\CM$ be a multiset of positive integers. Consider the following question: Does there exist a covering system for which the multiset of moduli is precisely $\CM$, or a subset thereof? We refer to questions of this type as \textit{integer covering} problems. The problems considered by Krukenberg and Dalton and Trifonov, as well as Theorem~\ref{5108} and Theorem~\ref{51440}, may be viewed as integer covering problems.

McNew and Setty \cite{McNew}, in Appendix B of their paper, note that such problems may be viewed as integer programming problems. We give a similar explanation here. 

Let 
\[\CM=\{d_1,d_2,\ldots,d_k\},\]
be a multiset of positive integers. Let
\[M=\{m_1,\ldots,m_n\}\]
be the set of distinct integers in $\CM$, and suppose $m_i$ appears exactly $f_i$ times in $\CM$. Define binary integer variables $x_{i,j}$, where $i$ ranges from $1$ to $n$, and for each $i$, $j$ ranges from $1$ to $m_i$. You should think of $x_{i,j}$ as being $1$ if and only if $j \pmod{m_i}$ is in the covering system we are attempting to construct with multiset of moduli $\CM$. 

With these variables, we need to add constraints to ensure that
\begin{enumerate}
\item the modulus $m_i$ is used at most $f_i$ times, and
\item every integer in $[1,\lcm(m_1,\ldots,m_n)]$ is covered.
\end{enumerate}
To make sure a potential covering system satisfies (a), we add for each $i \in [n]$ the constraint
\[\sum_{j=1}^{m_i} x_{i,j} \leq f_i.\]
To make sure a potential covering system satisfies (b), we add for each $b \in [1,\lcm(m_1,\ldots,m_n)]$ the constraint
\[\sum_{i=1}^n x_{i,b \mod{m_i}} \geq 1,\]
where $b \pmod{m_i}$ is understood here to be the least integer $c \in \{1,\ldots,m_i\}$ such that $c \equiv b \pmod{m_i}$. 

With this set-up, there is a covering for which the multiset of moduli is $\CM$ if and only if there is a solution to the above integer programming problem with $m_1+\cdots+m_n$ variables and $n+\lcm(m_1,\ldots,m_n)$ constraints.

In practice, when setting up an integer programming problem to solve an integer covering problem, we will usually make a few simplifications. First off, using Lemma~\ref{translation} and Lemma~\ref{pauto}, we can and will assume that a few arithmetic progressions are fixed. Let us look at a simple example to illustrate how. 

Suppose we have the integer covering problem with $\CM=\{2,3,4,6,12\}$. By Lemma~\ref{translation}, if there exists a covering system $C$ with these moduli, then there in fact exists a covering system $C'$ with these moduli that contains $0 \mod 2$ and $0 \mod 3$. Indeed, by the Chinese Remainder Theorem, it must be the case that one of $C, C+1, C+2,\ldots,C+5$ contains both $0 \mod 2$ and $0 \mod 3$. Now, if there exists a covering system with these moduli, then there is one in which the arithmetic progression with modulus 6 does not intersect with $0 \mod 2$ and $0 \mod 3$. By applying Lemma~\ref{pauto} with $p=3$, we may assume the arithmetic progression intersects with $1 \mod 3$, and so we may suppose that we in fact have $1 \mod 6$. 

By doing this, we reduce the number of variables in our integer programming model by $2+3+6=11$. We may also get rid of a few constraints. Indeed, every integer in $\{0,1,2,3,4,6,7,8,9,10\}$ is covered by either $0 \mod 2$, $0 \mod 3$, or $1 \mod 6$, and so we do not need the constraints that guarantee that these integers are covered anymore. For this case, we are left with $4+12$ variables coming from the moduli $4$ and $12$, 2 constraints coming from (a), and 2 constraints coming from (b). The problem is thus simplified, and made easier to solve. In this case, of course, the problem is easy to solve, as illustrated by the covering system in the introduction, but the integer covering problems that arise in later sections are not so easy. 
\begin{rem*}
Through private communications with Nathan McNew, it became apparent that they were making similar reductions in \cite{McNew}, but they make no mention of these in their paper.   
\end{rem*}

In the following sections, we will be taking integer covering problems, making the necessary reductions, and then posing the reduced problems as integer programming problems. We then use Gurobi \cite{Gurobi} to solve these problems. If the MIP (Mixed-Integer Programming) solver returns no solutions, then we know that the correponding integer covering problem is infeasible. The relevant code can be found at the following link: https://github.com/JonahKleinGit/Integer-covering-problems

\section{Proof of Theorem~\ref{5108}}\label{5108proof}

In this section, we prove Theorem~\ref{5108}. Suppose there exists a covering system with distinct moduli, all of which are in the interval $[5,107]$. By Lemma~\ref{Krukenberglemma1}, we may suppose that all moduli are divisors of
$2^5 \cdot 3^2 \cdot 5 \cdot 7$. This leaves us with the set of potential moduli
\begin{align*}\CM:=&\{5,6,7,8,9,10,12,14,15,16,18,20,21,24,28,30,32,35,36,40,42,45,48,56,60,63,70,\\
&72,80,84,90,96,105\}.
\end{align*}
By Lemma~\ref{Krukenberglemma2}, we may replace $\{32,96\}$ by a single arithmetic progression with modulus $48$, leaving us with
\begin{align*}\CM:=&\{5,6,7,8,9,10,12,14,15,16,18,20,21,24,28,30,35,36,40,42,45,48,48,56,60,63,70,\\
&72,80,84,90,105\}
\end{align*}
as our potential moduli. By Lemma~\ref{translation}, if a covering system exists with these moduli, then one exists which contains $\{4 \mod 5, 6 \mod 7, 7 \mod 8, 8 \mod 9\}$. If such a covering system exists with the moduli set above, we may additionally assume that our arithmetic progression with modulus 35 does not intersect with both $4  \mod 5$ and $6 \mod 7$. By applying Lemma~\ref{pauto} successively with $p=5$ and $p=7$, we may suppose that such a covering would contain $33 \mod{35}$. 

We then set up this problem as an integer programming problem using Gurobi. After about 10 hours, the model was deemed infeasible, which implies that there is no covering system with multiset of moduli $\CM$. This proves Theorem~\ref{5108}. 
\section{Proof of Theorem~\ref{51440}}\label{51440proof}
In this section, we prove Theorem~\ref{51440}. To prove this theorem, we need to show that any positive integer $<1440$ cannot be the least common multiple of the moduli in a distinct covering system with minimum modulus 5. Let $0<L<1440$ be a candidate for such an integer, where necessarily 5 divides $L$. If there exists a distinct covering system $C$ with minimum modulus 5 and least common multiple $L$, then we may add in any missing divisors of $L$ that are $\geq 5$ as moduli in this covering system and still have a covering system. Thus, we may suppose that all divisors of $L$ that are $\geq 5$ are moduli in $C$. 

Lemma~\ref{Mirsky} implies that
\[\sum_{\substack{d|L\\d \geq 5}}\frac{1}{d} >1.\]
This leaves us with the list of candidates
\[\{240,360,420,480,540,600,630,720,840,900,960,990,1050,1080,1200,1260,1320\}.\]
We may eliminate all candidates that are $<720$ by noticing that if there is a distinct covering system with divisors of $L_1$ that are $ \geq 5$, and $L_1|L_2$, then there is also a distinct covering system with divisors of $L_2$. We can also eliminate 990 and 1320 by Lemma~\ref{smallerp}. Thus, we are left with the set of 8 candidates
\[K=\{720,840,900,960,1050,1080,1200,1260\}.\]
For each $L \in K$, we are left with an integer covering problem in which 
\[\CM=\{d\geq 5: d|L\}.\]
We now look at how to deal with $L=720$. Our potential moduli are all divisors of $720$ that are $\geq 5$, that is
\[\CM=\{5,6,8,9,10,12,15,16,18,20,24,30,36,40,45,48,60,72,80,90,120,144,180,240,360\}.\]
By Lemma~\ref{translation}, we may suppose that if a covering system exists with these moduli, then one exists that contains $\{4 \mod 5, 7 \mod 8,8 \mod9\}$. We convert this reduced problem as an integer programming problem, and ask the solver to solve it for us. The solver deems that the model is infeasible within a few seconds, and so there is no covering system with moduli set $\CM$. The other 7 cases are treated in similar ways, and are all deemed infeasible in a few seconds. This proves Theorem~\ref{51440}. 

\section{Proof of Theorem~\ref{65040}}\label{65040proof}
In this section, we prove Theorem~\ref{65040}. We proceed in the same fashion as the proof of Theorem~\ref{51440}. First off, Lemma~\ref{Mirsky} implies that the possible candidates are
\begin{align*}&\{504, 720, 840, 1008, 1080, 1260, 1440, 1512, 1680, 1800, 1848, 1890, 
1980, 2016, 2100, 2160, 2520,\\
& 2640, 2772, 2880, 3024, 3120, 3150, 
3168, 3240, 3276, 3360, 3528, 3600, 3696, 3780, 3960, 4032,\\
& 4200, 
4320, 4368, 4536, 4620, 4680, 4752\}\end{align*}
We may eliminate all candidates that are $<2520$. Using Lemma~\ref{smallerp}, we may also eliminate all candidates in the set
\[\{2640,3120,3168,3276,3960,4368,4680,4752\},\]
 leaving us with the 16 candidates in
\begin{align*}\{2520, 2772, 2880, 3024, 3150, 3240, 3360, 3600, 3528, 3696,3780, 4032,4200, 4320,4536,4620\}.\end{align*}
We eliminate each one at a time by reducing and then solving them as integer programming problems. Most are done rather quickly. Only $2520$ and $4320$ take a significant amount of time to solve, with $2520$ taking the longest time to solve, at around 2.5 hours. This proves Theorem~\ref{65040}. 

\section{Proof of Theorem~\ref{6192}}\label{6192proof}

The covering system
\begin{align*}
\{& 2 \mod 6, 6 \mod 7, 7 \mod 8, 6 \mod 9, 4 \mod {10}, 10 \mod{11}, 5 \mod{12}, 5 \mod{14},\\
& 10 \mod{15}, 11 \mod{16}, 12 \mod{18}, 8 \mod{20}, 10\mod{21},9 \mod{22}, 22\mod{24},\\
& 23 \mod{25}, 18\mod{27}, 25\mod{28}, 16 \mod{30}, 19 \mod{32}, 6 \mod{33}, 29 \mod{35},\\
& 21 \mod{36}, 18 \mod{40}, 37 \mod{42}, 41 \mod{44}, 0 \mod{45}, 19 \mod{48}, 18 \mod{50},\\
& 36 \mod{54}, 47 \mod{55}, 21 \mod{56}, 52 \mod{60}, 45 \mod{63}, 27 \mod{66},43 \mod{70}, \\
&3 \mod{72}, 63 \mod{75}, 0 \mod{77}, 49 \mod{80}, 1 \mod{84}, 73 \mod{88}, 72 \mod{90}, 35 \\
&\mod{96}, 58 \mod{100}, 57 \mod{105}, 9 \mod{108}, 57 \mod{110}, 105 \mod{112}, \\
&82 \mod{120}, 9 \mod{126}, 81 \mod{132}, 81 \mod{135}, 133 \mod{140}, 99 \mod{144}, \\
&78 \mod{150},133 \mod{154}, 49 \mod{168}\}
\end{align*}
demonstrates a proof of Theorem \ref{6192}. 
\begin{rem*}
The above covering system was found by Ognian Trifonov. 
\end{rem*}
\section*{Acknowledgements}
The author would like to thank Michael Filaseta and Ognian Trifonov for their advice, and Nathan McNew for a helpful conversation and for sending his preprint. 

\section*{Funding}
The author is supported by a scholarship from the Natural Sciences and Engineering Research Council of Canada (NSERC).

\bibliographystyle{plain}

\end{document}